\newtheorem{thm}{Theorem}
\newtheorem{lem}{Lemma}
\newtheorem{cor}{Corollary}
\begin{document}
\author{G\'erard Endimioni}
\address{C.M.I-Universit\'{e} de Provence\\
39, rue F. Joliot-Curie, F-13453 Marseille Cedex 13}
\email{endimion@gyptis.univ-mrs.fr}
\title[Automorphisms fixing every normal subgroup]
{Automorphisms fixing every normal subgroup of a nilpotent-by-abelian group}
\subjclass[2000]{20E36, 20F16, 20F28}
\keywords{Normal automorphism, nilpotent-by-abelian groups, metabelian groups, supersoluble groups.}
% \dedicatory{}
%\thanks{}
%
\begin{abstract} Among other things, we prove that the group of automorphisms fixing every normal subgroup of a (nilpotent of class $c$)-by-abelian group  is (nilpotent of class $\leq c$)-by-metabelian.
In particular, the group of automorphisms fixing every normal subgroup of a metabelian group is soluble of derived length at most 3. An example shows that this bound cannot be improved. 
\end{abstract}
\maketitle
%
% Section 1: Introduction and results.
\section{Introduction and results.}
If $G$ is a group, we write $\rm{Aut}(G)$ for the group of all automorphisms of $G$. An automorphism $\varphi\in\rm{Aut}(G)$ is said to be {\em normal}
if $\varphi(H)=H$ for each normal subgroup $H$ of $G$.
These automorphisms form a normal subgroup of $\rm{Aut}(G)$, denoted by $\rm{Aut_n}(G)$. 
Obviously, $\rm{Aut_n}(G)$ contains the subgroup $\rm{Inn}(G)$
of all inner automorphisms of $G$. 
These subgroups can coincide,
for instance when $G$ is a nonabelian free group \cite{LU}.
We have a similar result when $G$ is a nonabelian free nilpotent group of class $c=2$,  
but if $c\geq 3$, then the quotient $\rm{Aut_n}(G)/\rm{Inn}(G)$ is infinite \cite{EN}. 
Also one can find in \cite{EN1} a description of the normal automorphisms of a free metabelian nilpotent group.
In general, the structure of $\rm{Aut_n}(G)$ can be quite diverse since for an arbitrary finite group $F$, there is a finite semisimple group $G$ such that $\rm{Aut_n}(G)/\rm{Inn}(G)$ has a subgroup isomorphic with $F$ \cite{RO}. Nevertheless, as one can expect, it is possible to obtain some information about $\rm{Aut_n}(G)$ when $G$ belongs to certain classes of groups. For instance, when $G$ is nilpotent, the group $\rm{Aut_n}(G)$ is nilpotent-by-abelian \cite{FG}. In the same paper, the authors show that  $\rm{Aut_n}(G)$ is polycyclic when $G$ is. In particular, if $G$ is a finite soluble group, then so is  $\rm{Aut_n}(G)$. We do not know if this assertion remains true
without the term "finite". In other words, is $\rm{Aut_n}(G)$ soluble when $G$ is soluble ?
Here we respond to this question in the positive when  $G$ is soluble of derived length $\leq 2$, and more generally when $G$ is nilpotent-by-abelian.
% th 1
\begin{thm} The group of all normal automorphisms of a (nilpotent of class $c$)-by-abelian group  is (nilpotent of class $\leq c$)-by-metabelian. 
\end{thm}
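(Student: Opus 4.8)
The plan is to produce a normal subgroup $M$ of $A:=\mathrm{Aut_n}(G)$ that is nilpotent of class $\le c$ and such that $A/M$ is metabelian. First a reduction: if $N\trianglelefteq G$ has $G/N$ abelian and $N$ nilpotent of class $\le c$, then $G'\le N$, so $G'$ itself is characteristic, nilpotent of class $\le c$, with $G/G'$ abelian; so I may work with $G'$ in place of $N$. Write $\gamma_i=\gamma_i(G')$, so $\gamma_1=G'$ and $\gamma_{c+1}=1$, and set $V_i=\gamma_i/\gamma_{i+1}$.

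\emph{The abelian top layer.} Every subgroup of $G/G'$ is the image of a normal subgroup of $G$, so the restriction map $A\to\mathrm{Aut}(G/G')$ has image inside the (abelian) group of power automorphisms of $G/G'$; hence $A/K$ is abelian, where $K$ is the kernel. Next, each $V_i$ is a module over the commutative ring $R:=\mathbb Z[G/G']$ — the conjugation action of $G$ is trivial on $G'$ because $[\gamma_i,G']=\gamma_{i+1}$, so it factors through $G/G'$ — and the $G$-normal subgroups of $\gamma_i$ above $\gamma_{i+1}$ are precisely the $R$-submodules of $V_i$. An element $\varphi\in A$ fixes $\gamma_i,\gamma_{i+1}$ and every $R$-submodule of $V_i$, and acts on $V_i$ $R$-semilinearly via the power automorphism it induces on $G/G'$. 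The group of such semilinear twists is abelian (power automorphisms of the abelian group $G/G'$ commute), and the $R$-linear automorphisms of $V_i$ fixing every cyclic submodule form an abelian group, since $R$ is commutative: if $\alpha(v)=r_v v$ and $\beta(v)=s_v v$ then $\alpha\beta(v)=s_vr_v v=r_vs_v v=\beta\alpha(v)$. So the image of $A$ in $\mathrm{Aut}(V_i)$ is metabelian; in particular $A''$ acts trivially on every $V_i$, and also $A'\le K$ gives $[G,A'']\le[G,A']\le G'$.

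\emph{The nilpotent bottom layer.} Let $M$ be the stabilizer in $\mathrm{Aut}(G)$ of the chain
\[
G\trianglerighteq \gamma_1=G'\trianglerighteq\gamma_2\trianglerighteq\cdots\trianglerighteq\gamma_{c+1}=1,
\]
i.e.\ the $\varphi$ with $[G,\varphi]\le G'$ and $[\gamma_i,\varphi]\le\gamma_{i+1}$ for all $i$. Each term is characteristic in $G$, so $M\trianglelefteq\mathrm{Aut}(G)$ and hence $M\cap A\trianglelefteq A$; by the preceding paragraph $A''\le M$, so $A''\le M\cap A$. Thus $A/(M\cap A)$ is metabelian, and it remains to show $M$ is nilpotent of class $\le c$ — the main point, since P.\ Hall's stability theorem alone only yields class $\le\binom{c+1}{2}$. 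The improvement comes from the shape of the chain: restriction to $G'$ maps $M$ into the stability group of the lower central series of $G'$, an $N$-series of length $c$, which is nilpotent of class $\le c-1$; while the kernel $L$ of $M\to\mathrm{Aut}(G')$ — the automorphisms trivial on $G/G'$ \emph{and} on $G'$ — embeds, via $\varphi\mapsto d_\varphi$ with $d_\varphi(g)=g^{-1}\varphi(g)\in G'$, into the group of all maps $G\to G'$ under pointwise multiplication (composition of such $\varphi$'s multiplies the $d$'s pointwise, because $d_\varphi(d_\psi(g))=1$), which is nilpotent of class $\le c$. One then has to show that the extension $1\to L\to M\to M/L\to1$ is itself nilpotent of class $\le c$, by checking that $M$ acts unipotently enough on $L$ (concretely, that conjugation by $M$ pushes $L$ down the filtration $L_j=\{\varphi\in L:d_\varphi(G)\subseteq\gamma_{j+1}\}$). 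Establishing this sharp class bound for $M$ is the hard part.

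Granting that, $A=\mathrm{Aut_n}(G)$ is (nilpotent of class $\le c$)-by-metabelian, as claimed; for $c=1$ this reads: $\mathrm{Aut_n}(G)$ is abelian-by-metabelian, hence soluble of derived length $\le 3$. I expect the obstacle to lie entirely in the third paragraph — the naive P.\ Hall bound $\binom{c+1}{2}$ must be replaced by $c$, and this forces one to use that below $G'$ the chain is the lower central series of $G'$.
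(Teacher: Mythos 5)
Your first two paragraphs are sound, and in fact they are the paper's own argument in module-theoretic dress: the identity $f(a)=\prod_i u_i^{-1}a^{\lambda_i}u_i$ for a normal automorphism (coming from invariance of the normal closure of $a$) is exactly the statement that $f$ multiplies $a$ by an element of $R=\mathbb{Z}[G/G']$ modulo $\gamma_{k+1}(G')$, and commutativity of $R$ is what makes the relevant commutators act trivially on each $\gamma_k(G')/\gamma_{k+1}(G')$. So the conclusion that $A''$ (the paper works with $K'$, where $K$ is the group of normal IA-automorphisms and $A'\leq K$) stabilizes the chain $1=\gamma_{c+1}(G')\leq\cdots\leq\gamma_1(G')=G'\leq G$ is correct and matches the paper.

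The genuine gap is exactly where you flag it: you never prove that the stabilizer of this chain is nilpotent of class $\leq c$, and the reduction you sketch (the subgroup $L$, the map $\varphi\mapsto d_\varphi$, the filtration $L_j$, and the unproved claim that the extension $1\to L\to M\to M/L\to 1$ has class $\leq c$) is left as "the hard part" and is not carried out. But this step requires no new argument at all: the bound $\binom{c+1}{2}$ you attribute to P.~Hall is the bound for the stability group of an \emph{arbitrary} chain of subgroups, whereas for a chain all of whose terms are \emph{normal in $G$} the classical theorem of Kalu\v{z}nin (cited in the paper from Segal, \emph{Polycyclic groups}, p.~9) says the stability group is nilpotent of class at most the number of factors minus one. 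Your chain has $c+1$ factors and every term $\gamma_i(G')$ is characteristic, hence normal, in $G$, so Kalu\v{z}nin gives class $\leq c$ immediately; the entire third paragraph can be replaced by this one citation. (Your intermediate claims there, e.g.\ that the stability group of the lower central series of $G'$ has class $\leq c-1$, are themselves instances of Kalu\v{z}nin's theorem, so the workaround is circular as a way of avoiding it.) With that substitution your proof closes up and coincides in substance with the paper's.
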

When $c=1$, we obtain:
% cor 1 
\begin{cor} The group of all normal automorphisms of a metabelian group is soluble of derived length at most 3. 
\end{cor}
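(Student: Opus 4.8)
The plan is to establish the theorem (the corollary being its case $c=1$), and for this it suffices to prove the sharper fact that $A''\leq S$, where $A$ denotes the group of all normal automorphisms of $G$ and $S$ is the stability group in ${\rm Aut}(G)$ of a suitable finite normal series of $G$ of length $c+1$: by the Kaluzhnin--Hall theorem $S$, hence $A''$, is then nilpotent of class $\leq c$, and since $A/A''$ is metabelian this is exactly the assertion (with $A''$ playing the role of the nilpotent normal subgroup). First I would reduce to a convenient setting: as $G$ is (nilpotent of class $c$)-by-abelian there is a normal subgroup $N$ with $N$ nilpotent of class $\leq c$ and $G/N$ abelian, whence $G'\leq N$, so $G'$ is itself nilpotent of class $\leq c$ and $G/G'$ is abelian, and we may assume $N=G'$. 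Put $G_0=G$ and $G_i=\gamma_i(G')$ for $1\leq i\leq c+1$, so that $G_1=G'$ and $G_{c+1}=1$; each $G_i$ is characteristic in $G'\trianglelefteq G$, hence normal in $G$, so $G=G_0\geq G_1\geq\cdots\geq G_{c+1}=1$ is a normal series with $c+2$ terms, and its stability group $S$ in ${\rm Aut}(G)$ is nilpotent of class $\leq c$ by Kaluzhnin's theorem. It remains to show that every $\beta\in A''$ lies in $S$, i.e.\ acts trivially on each factor $G_i/G_{i+1}$.

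For the top factor $G/G'$: every subgroup of $G/G'$ has the form $H/G'$ with $G'\leq H\leq G$, and such an $H$ is normal in $G$; hence each $\varphi\in A$ fixes every subgroup of $G/G'$, that is, induces a power automorphism of the abelian group $G/G'$. Power automorphisms of an abelian group commute with one another (on any cyclic subgroup $\langle x\rangle$ they act as power maps, and power maps of a cyclic group commute), so the image of $A$ in ${\rm Aut}(G/G')$ is abelian; therefore $A'$, a fortiori $A''$, acts trivially on $G/G'$, which settles the factor $G_0/G_1$ and moreover gives the relation $\varphi(g)g^{-1}\in G'$ for all $\varphi\in A'$ and $g\in G$ that is used below.

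The heart of the matter is the factors $V_i:=G_i/G_{i+1}=\gamma_i(G')/\gamma_{i+1}(G')$ for $1\leq i\leq c$. Since $[\gamma_i(G'),G']\leq\gamma_{i+1}(G')$, the subgroup $G'$ centralizes $V_i$, so the conjugation action of $G$ makes $V_i$ a module over the commutative ring $R:=\mathbb{Z}[G/G']$, and every $\varphi\in A$, fixing all $G$-invariant subgroups of $G_i$, fixes every $R$-submodule of $V_i$. The crucial point is that an element $\varphi\in A'$ acts on $V_i$ not merely additively but $R$-linearly: writing $\varphi(g)=g d_g$ with $d_g\in G'$ (legitimate by the previous paragraph) and using once more $[\gamma_i(G'),G']\leq\gamma_{i+1}(G')$ together with the normality of $\gamma_{i+1}(G')$ in $G$, a short computation yields $\varphi(gxg^{-1})\equiv g\varphi(x)g^{-1}\pmod{\gamma_{i+1}(G')}$ for all $x\in\gamma_i(G')$, i.e.\ the induced map on $V_i$ commutes with the $R$-action. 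Now an $R$-linear automorphism $\varphi$ of an $R$-module fixing every submodule satisfies $\varphi(v)\in Rv$, say $\varphi(v)=r_v v$; so for two such automorphisms $\varphi,\psi$, writing $\psi(v)=s_v v$, commutativity of $R$ gives $\varphi\psi(v)=s_v\varphi(v)=s_v r_v v=r_v s_v v=r_v\psi(v)=\psi\varphi(v)$, whence $\varphi\psi=\psi\varphi$. Hence the image of $A'$ in ${\rm Aut}(V_i)$ is abelian, so $A''=[A',A']$ acts trivially on each $V_i$. Together with the previous paragraph this yields $A''\leq S$ and proves the theorem; for the corollary, when $c=1$ the series is $G\geq G'\geq 1$, its stability group $S$ is abelian, so $A''$ is abelian and $A$ has derived length at most $3$.

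The step I expect to be the genuine obstacle, and the one I would present with the most care, is the passage to $A'$ in the third paragraph: it is precisely because elements of $A'$ (and not all of $A$) act $R$-linearly — a consequence, via the second paragraph, of $G/G'$ being abelian — that the a priori non-abelian condition ``fixes every $R$-submodule'' becomes an abelian condition on the image of $A'$. The hypothesis that $G$ be nilpotent-by-abelian rather than merely soluble enters through the finiteness of the lower central series of $G'$, which is what makes the series $G\geq\gamma_1(G')\geq\cdots\geq\gamma_{c+1}(G')=1$ finite and lets Kaluzhnin's theorem produce the class bound $c$ at all. Minor points to watch are the index bookkeeping in Kaluzhnin's theorem (a normal series with $c+2$ terms yields a stability group of nilpotency class at most $c$) and the fact that the whole argument is valid for arbitrary groups, with no finiteness or finite-generation hypothesis.
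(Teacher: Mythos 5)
Your proposal is correct and follows essentially the same route as the paper's proof: normal automorphisms induce commuting power automorphisms of the abelian group $G/G'$, so the derived subgroup of ${\rm Aut_n}(G)$ consists of normal IA-automorphisms, and commutators of normal IA-automorphisms stabilize the normal series $G\geq G'=\gamma_1(G')\geq\cdots\geq\gamma_{c+1}(G')=1$, after which Kalu\v{z}nin's theorem gives the nilpotency of class $\leq c$ (and, for $c=1$, derived length $\leq 3$). The only differences are presentational: you recast the paper's Lemma 1 in module language (each factor $\gamma_i(G')/\gamma_{i+1}(G')$ as a module over the commutative ring $\mathbb{Z}[G/G']$, on which normal IA-automorphisms act $R$-linearly and fix every submodule, hence send $v$ to $r_vv$ and commute), which is the same computation as the paper's explicit product-of-conjugates argument, and you work with $A''$ where the paper proves the slightly stronger fact that $K'$ (with $K\supseteq A'$ the full group of normal IA-automorphisms) is nilpotent of class $\leq c$ — both versions yield the stated (nilpotent of class $\leq c$)-by-metabelian conclusion.
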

As usual, denote by $S_4$ the symmetric group of degree 4 and by $A_4$ its alternating subgroup. Then $\rm{Aut_n}(A_4)=\rm{Aut}(A_4)$ is isomorphic to $S_4$. Since $A_4$ is metabelian and $S_4$ is soluble of derived length 3, the bound of the derived length given in the corollary above cannot be improved. 

We shall see that the proof of Theorem 1 also leads to the following result.
% th 2
\begin{thm} Let $G$ be a (nilpotent of class $c$)-by-abelian group. Suppose that 
its abelianization  $G/G'$ is either finite or infinite non-periodic (that is the case for example when $G$ is finitely generated).
Then the group of all normal automorphisms of $G$  is virtually (nilpotent of class $\leq c$)-by-abelian.
\end{thm}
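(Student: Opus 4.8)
\emph{Proof of Theorem 2 (plan).}
The idea is to run the proof of Theorem~1 and to sharpen one of its ``abelian'' layers into ``finite''. Since $G$ is (nilpotent of class $c$)-by-abelian, the derived subgroup $G'$ is contained in a nilpotent group of class $\leq c$, hence is itself nilpotent of class $\leq c$; so we may take $G'$ as the normal subgroup witnessing the hypothesis, and $G/G'$ is precisely the abelianization occurring in the statement. Let $H$ be the subgroup of $\mathrm{Aut}_n(G)$ consisting of the automorphisms acting trivially on $G/G'$; as $G'$ is characteristic, $H=\mathrm{Aut}_n(G)\cap\ker\bigl(\mathrm{Aut}(G)\to\mathrm{Aut}(G/G')\bigr)$, so $H\trianglelefteq\mathrm{Aut}_n(G)$. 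The proof of Theorem~1 in fact establishes the refinement that $H$ is (nilpotent of class $\leq c$)-by-abelian, Theorem~1 being then obtained from the (classical) fact that $\mathrm{Aut}_n(G)/H$, being a group of power automorphisms of the abelian group $G/G'$, is abelian. Granting this refinement, it suffices to prove $[\mathrm{Aut}_n(G):H]<\infty$: then $H$ is a finite-index subgroup of $\mathrm{Aut}_n(G)$ that is (nilpotent of class $\leq c$)-by-abelian, which is the desired conclusion.

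So the one new thing to prove is $[\mathrm{Aut}_n(G):H]<\infty$. First I would note that the image of $\mathrm{Aut}_n(G)$ in $\mathrm{Aut}(A)$, where $A=G/G'$, consists of power automorphisms. Indeed, every subgroup of $A$ is of the form $L/G'$ with $G'\leq L\leq G$, and such an $L$ is normal in $G$ because $A$ is abelian; hence $\varphi(L)=L$ for each $\varphi\in\mathrm{Aut}_n(G)$, so the induced automorphism of $A$ fixes $L/G'$. Writing $\mathrm{Pow}(A)$ for the group of power automorphisms of $A$ (automorphisms $\alpha$ with $\alpha(a)\in\langle a\rangle$ for all $a$), we obtain an embedding of $\mathrm{Aut}_n(G)/H$ into $\mathrm{Pow}(A)$, so $[\mathrm{Aut}_n(G):H]\leq|\mathrm{Pow}(A)|$.

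It then remains to check that $\mathrm{Pow}(A)$ is finite, which is exactly where the hypothesis on $A=G/G'$ is used. If $A$ is finite this is clear, since $\mathrm{Aut}(A)$ is finite. If $A$ is non-periodic, let $T$ be the torsion subgroup, so $A/T$ is a non-trivial torsion-free abelian group; every $\alpha\in\mathrm{Pow}(A)$ induces a power automorphism of $A/T$, and a power automorphism of a non-trivial torsion-free abelian group is necessarily the identity or inversion (an element of infinite order goes to itself or its inverse, and comparing two such elements shows the two options cannot be mixed); a brief bootstrap from $A/T$ back to $A$ then forces $\alpha$ itself to be the identity or inversion, so $|\mathrm{Pow}(A)|\leq 2$. (This is standard power-automorphism theory.) Either way, $H$ has finite index in $\mathrm{Aut}_n(G)$, and therefore $\mathrm{Aut}_n(G)$ is virtually (nilpotent of class $\leq c$)-by-abelian. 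Finally, if $G$ is finitely generated then $A=G/G'$ is a finitely generated abelian group, hence finite or infinite non-periodic, which justifies the parenthetical remark.

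The genuinely substantial step above is the refinement of Theorem~1 --- that the normal automorphisms acting trivially on $G/G'$ form a (nilpotent of class $\leq c$)-by-abelian group --- but that is not new work here: it is the core of the proof of Theorem~1, which we are entitled to invoke. Everything specific to Theorem~2 is soft; the only care needed is to verify that nothing stronger than the stated hypothesis on $G/G'$ creeps in, and this is consistent with the fact that $\mathrm{Pow}(A)$ can genuinely be infinite for an infinite periodic abelian group $A$ (for instance a direct sum of cyclic groups of pairwise distinct prime orders), so some restriction on $G/G'$ is unavoidable in this line of argument.
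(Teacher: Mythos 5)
Your proof is correct and essentially identical to the paper's: the paper also takes $K=\ker\bigl(\mathrm{Aut_n}(G)\to\mathrm{Aut_n}(G/G')\bigr)$ (your $H$), notes that the proof of Theorem~1 already shows $K$ is (nilpotent of class $\leq c$)-by-abelian, and concludes by observing that the power automorphisms of an abelian group that is finite or infinite non-periodic form a finite group (order $\leq 2$ in the non-periodic case, cited from Cooper and Robinson rather than sketched), so $\mathrm{Aut_n}(G)/K$ is finite.
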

In particular, the group of normal automorphisms of a finitely generated metabelian group is 
virtually metabelian. 

Since a supersoluble group is nilpotent-by-abelian and finitely generated, 
it follows from Theorem 2 that its group of all normal automorphisms is
virtually nilpotent-by-abelian. In fact, we have a stronger result.
% th 3
\begin{thm} The group of all normal automorphisms of a supersoluble group
is finitely generated and nilpotent-by-(finite and supersoluble).
\end{thm}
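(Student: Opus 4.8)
The plan is to use supersolubility directly, through its defining feature — a finite normal series with cyclic factors — together with the classical theorem that the stabiliser of a finite subgroup chain is nilpotent; Theorem~2 is not logically needed on this route. Finite generation is the easy half: a supersoluble group is polycyclic, so by the result of \cite{FG} recalled above $\mathrm{Aut}_n(G)$ is polycyclic, hence finitely generated, and indeed Noetherian, so each of its subgroups is finitely generated as well.

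For the structural part, fix a series $1 = G_0 \triangleleft G_1 \triangleleft \cdots \triangleleft G_n = G$ with each $G_i$ normal in $G$ and each quotient $G_{i+1}/G_i$ cyclic — this is exactly what supersolubility of $G$ provides. Every normal automorphism of $G$ fixes each of the normal subgroups $G_i$, hence induces an automorphism of each factor $G_{i+1}/G_i$, and these inductions respect composition, yielding a homomorphism
\[
  \rho : \mathrm{Aut}_n(G) \longrightarrow \prod_{i=0}^{n-1} \mathrm{Aut}\bigl(G_{i+1}/G_i\bigr).
\]
The automorphism group of a cyclic group, finite or infinite, is finite and abelian, so the target of $\rho$, and with it the image $B := \rho\bigl(\mathrm{Aut}_n(G)\bigr)$, is a finite abelian group; in particular $B$ is finite and supersoluble.

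It remains to treat $K := \ker\rho$. By construction $\varphi \in K$ if and only if $[G_{i+1},\varphi] \le G_i$ for all $i$; equivalently, $K$ stabilises the finite chain $G = G_n \ge G_{n-1} \ge \cdots \ge G_0 = 1$. By the Kaluznin--P.~Hall theorem, the full stabiliser in $\mathrm{Aut}(G)$ of such a chain of length $n$ is nilpotent (of class at most $\binom{n}{2}$), hence so is $K$; and $K$ is finitely generated by the first paragraph. Therefore $\mathrm{Aut}_n(G)$ is a finitely generated group having a normal nilpotent subgroup $K$ with $\mathrm{Aut}_n(G)/K \cong B$ finite and supersoluble, which is the assertion.

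I do not anticipate a real obstacle: the proof is an assembly of standard ingredients. The one conceptual point is to see that supersolubility should be exploited via its cyclic normal series rather than by bootstrapping from Theorem~2, and the only routine checks are that such a series exists with all terms normal in $G$ (this is the definition of supersolubility) and that $\rho$ is well defined. Incidentally, the argument yields a quotient that is even finite \emph{abelian}, slightly more than "finite and supersoluble", so nothing essential is lost in proving only the stated form.
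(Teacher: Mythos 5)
Your proof is correct, and its skeleton is the same as the paper's: finite generation via polycyclicity of $\mathrm{Aut_n}(G)$ from \cite{FG}, and the structural part via a cyclic normal series of $G$ whose stabiliser in $\mathrm{Aut_n}(G)$ is nilpotent by Kalu\v{z}nin--Hall, with a finite quotient on top. The one genuine divergence is how the quotient is analysed. The paper first refines the series, via Zappa's theorem, to one whose factors are cyclic of prime or infinite order, and then builds the descending chain of stabilisers $\Gamma_0\geq\Gamma_1\geq\cdots\geq\Gamma_m$, showing each $\Gamma_k/\Gamma_{k+1}$ embeds in the finite cyclic group $\mathrm{Aut}(G_k/G_{k+1})$, so that $\mathrm{Aut_n}(G)/\Gamma_m$ has a normal series with finite cyclic factors and is therefore finite and supersoluble. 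You instead map $\mathrm{Aut_n}(G)$ directly into $\prod_i\mathrm{Aut}(G_{i+1}/G_i)$, using only that the automorphism group of any cyclic group (finite or infinite) is finite abelian; this dispenses with Zappa's refinement and even yields the slightly stronger conclusion that the quotient by the nilpotent kernel is finite \emph{abelian}, which of course implies finite and supersoluble. Note also that since your chain consists of subgroups normal in $G$, Kalu\v{z}nin's form of the stabiliser theorem gives nilpotency of class at most $n-1$; your quoted bound $\binom{n}{2}$ is P.~Hall's bound for arbitrary chains and is weaker, but either suffices since the statement claims no class bound.
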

%
% Section 2: Proofs.
 \section{Proofs.} 
Let $G$ be a group and let $G'$ denote its derived subgroup.
Clearly, each normal automorphism $f\in\rm{Aut_n}(G)$ induces in $G/G'$ a normal 
automorphism $f'\in\rm{Aut_n}(G/G')$. Consider the homomorphism  $\Phi\colon
\rm{Aut_n}(G)\to\rm{Aut_n}(G/G')$ defined by $\Phi(f)=f'$ and put $K=\ker\Phi$.
In other words, $K$ is the set of normal IA-automorphisms of $G$ (recall that an
automorphism of $G$ is said to be an {\em IA-automorphism} if it induces the identity automorphism in $G/G'$). Before to prove Theorem 1, we establish a preliminary result regarding the elements of $K$.
% lem
\begin{lem} Let $k$ be a positive integer. In a group $G$, consider an element $a\in\gamma_k(G')$, where 
$\gamma_k(G')$ denotes the $k$th term of the lower central series of $G'$.
If $f$ and $g$ are normal IA-automorphisms of $G$ ({\em i.e.} $f,g\in K$), we have:
\begin{enumerate}
  \item[(i)]  for all $u\in G$, $f(u^{-1}au)\equiv u^{-1}f(a)u\;\;{\rm mod}\,\gamma_{k+1}(G')$;
  \item[(ii)] $g^{-1}\circ f^{-1}\circ g\circ f(a)\equiv a\;\;{\rm mod}\,\gamma_{k+1}(G')$.
\end{enumerate}
\end{lem}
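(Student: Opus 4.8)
The plan is to prove (i) first and then deduce (ii) from it. For (i): since $\gamma_k(G')$ is characteristic in $G'$ and $G'$ is normal in $G$, the subgroup $\gamma_k(G')$ is normal in $G$, so a normal automorphism fixes it setwise; thus $f(a)\in\gamma_k(G')$, and, $\gamma_k(G')$ being normal in $G$, also $u^{-1}f(a)u\in\gamma_k(G')$. As $f$ is an IA-automorphism I would write $f(u)=ud$ with $d\in G'$; then $f(u^{-1}au)=f(u)^{-1}f(a)f(u)=d^{-1}(u^{-1}f(a)u)d=(u^{-1}f(a)u)\,[u^{-1}f(a)u,d]$. Since $u^{-1}f(a)u\in\gamma_k(G')$ and $d\in G'$, the commutator lies in $[\gamma_k(G'),G']=\gamma_{k+1}(G')$, which is normal in $G$; this gives the congruence in (i).

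For (ii), I would first reduce to the case $\gamma_{k+1}(G')=1$. Because $\gamma_{k+1}(G')$ is normal in $G$, the automorphisms $f,g$ induce normal IA-automorphisms $\bar f,\bar g$ of $\bar G=G/\gamma_{k+1}(G')$ — normal subgroups of $\bar G$ come from normal subgroups of $G$, and $\bar G'=G'/\gamma_{k+1}(G')$ — and one checks $\gamma_{k+1}(\bar G')=1$ while $\bar a\in\gamma_k(\bar G')$, so proving the claim in $\bar G$ gives it in $G$. Assuming now $\gamma_{k+1}(G')=1$, the subgroup $A:=\gamma_k(G')$ is central in $G'$, hence abelian, and the conjugation action of $G$ on $A$ is trivial on $G'$; it therefore makes $A$ a module over the \emph{commutative} ring $\mathbb{Z}[G/G']$. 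The $\mathbb{Z}[G/G']$-submodules of $A$ are exactly the subgroups of $A$ that are normal in $G$, so $f$ and $g$ fix every submodule of $A$; and by (i) — now an equality — $f$ and $g$ act on $A$ as $\mathbb{Z}[G/G']$-linear maps.

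To conclude, fix $a\in A$. The cyclic submodule $\mathbb{Z}[G/G']\cdot a$, which is nothing but the normal closure of $a$ in $G$, is invariant under both $f$ and $g$, so I can write $f(a)=r\cdot a$ and $g(a)=s\cdot a$ with $r,s\in\mathbb{Z}[G/G']$. Linearity gives $f(g(a))=s\cdot f(a)=(sr)\cdot a$ and $g(f(a))=(rs)\cdot a$, and since $G/G'$ is abelian we have $sr=rs$, whence $f(g(a))=g(f(a))$. Modulo $\gamma_{k+1}(G')$ this equality says precisely that $g^{-1}\circ f^{-1}\circ g\circ f$ fixes $a$, which is (ii).

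The commutator manipulations in (i) and the bookkeeping of the quotient are routine. The conceptual heart — and the step I would be most careful with — is the observation that a normal IA-automorphism restricts on $\gamma_k(G')/\gamma_{k+1}(G')$ to a $\mathbb{Z}[G/G']$-module automorphism fixing every submodule, hence scaling each element by a ring element, so that commutativity of $\mathbb{Z}[G/G']$ forces any two of them to commute; part (i) is exactly what makes this module-theoretic reading legitimate.
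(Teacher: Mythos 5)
Your argument is correct, and at its core it is the same as the paper's: both rest on the fact that $f(a)$ and $g(a)$ lie in the normal closure of $a$, i.e.\ are products of conjugates of powers of $a$, and that conjugation on $\gamma_k(G')$ modulo $\gamma_{k+1}(G')$ factors through the abelian group $G/G'$, which forces $f\circ g(a)\equiv g\circ f(a)$. The paper carries this out as an explicit commutator computation modulo $\gamma_{k+1}(G')$, whereas you first pass to $G/\gamma_{k+1}(G')$ and phrase the same calculation as multiplication by elements $r,s$ of the commutative ring $\mathbb{Z}[G/G']$ (your $r$ is exactly the paper's $\sum_i\lambda_i\overline{u_i}$); this is a clean repackaging rather than a different proof.
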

\begin{proof} (i) We have $f(u)=uu'$ for some $u'\in G'$, whence
$$f(u^{-1}au)=u'^{-1}u^{-1}f(a)uu'=u^{-1}f(a)u[u^{-1}f(a)u,u'],$$
where the commutator $[x,y]$ is defined by $[x,y]=x^{-1}y^{-1}xy$.
Since $[u^{-1}f(a)u,u']$ belongs to $\gamma_{k+1}(G')$, the result follows.

(ii) Since $f$ and $g$ preserve the normal closure of $a$ in $G$, there exist elements $u_1,\ldots,u_r,v_1,\ldots,v_s\in G$ and integers $\lambda_1,\ldots,\lambda_r,\mu_1,\ldots,\mu_s$ such that
$$f(a)=\prod_{i=1}^{r}u_i^{-1}a^{\lambda_i}u_i\:\: {\rm and}\:\:
g(a)=\prod_{i=1}^{s}v_i^{-1}a^{\mu_i}v_i.$$
Notice that in these products, the order of the factors is of no 
consequence modulo $\gamma_{k+1}(G')$. Using (i), one can then write
\begin{eqnarray*}
g\circ f(a) & \equiv & \prod_{i=1}^{r}g(u_i^{-1}a^{\lambda_i}u_i)\; \equiv\;  
\prod_{i=1}^{r}u_i^{-1}g(a)^{\lambda_i}u_i \\
{} & \equiv & \prod_{i=1}^{r}\prod_{j=1}^{s}
u_i^{-1}v_j^{-1}a^{\lambda_i\mu_j}v_ju_i \\
{} & \equiv & \prod_{i=1}^{r}\prod_{j=1}^{s}
[v_j,u_i]^{-1}v_j^{-1}u_i^{-1}a^{\lambda_i\mu_j}u_iv_j[v_j,u_i]\;\;{\rm mod}\,\gamma_{k+1}(G').
\end{eqnarray*}
Since 
$$[v_j,u_i]^{-1}v_j^{-1}u_i^{-1}a^{\lambda_i\mu_j}u_iv_j[v_j,u_i]\equiv
v_j^{-1}u_i^{-1}a^{\lambda_i\mu_j}u_iv_j  \;\;{\rm mod}\,\gamma_{k+1}(G'),$$
we obtain
$$g\circ f(a)  \equiv  \prod_{i=1}^{r}\prod_{j=1}^{s}
v_j^{-1}u_i^{-1}a^{\lambda_i\mu_j}u_iv_j
 \equiv  f\circ g(a)  \;\;{\rm mod}\,\gamma_{k+1}(G').$$
It follows $g^{-1}\circ f^{-1}\circ g\circ f(a)\equiv a\;\;{\rm mod}\,\gamma_{k+1}(G')$, which is the desired result.
\end{proof} 
% Proof of Theorem 1
\begin{proof}[Proof of Theorem 1] Thus we suppose now that $G$ is
(nilpotent of class c)-by-abelian, and so $\gamma_{c+1}(G')$ is trivial.
Recall that $K$ is the kernel of the the homomorphism  $\Phi\colon
\rm{Aut_n}(G)\to\rm{Aut_n}(G/G')$ defined above.
The group $G/G'$ being abelian, each normal automorphism of $G/G'$ is in fact a power automorphism, that is, an automorphism fixing setwise every subgroup of $G/G'$. Since the group of all power automorphisms of a group is always abelian \cite{CO}, so is $\rm{Aut_n}(G/G')$.
The group $\rm{Aut_n}(G)/K$ being isomorphic with a subgroup of $\rm{Aut_n}(G/G')$, it is
abelian, and so $\rm{Aut_n}(G)/K'$ is metabelian. It remains to see that $K'$ is nilpotent, of class at most $c$. For that, we notice that $K'$ stabilizes the normal series (of length $c+1$)
$$1=\gamma_{c+1}(G')\unlhd \gamma_{c}(G')\unlhd\cdots \gamma_{2}(G')\unlhd \gamma_{1}(G')=G'\unlhd G.$$
Indeed, the induced action of $K'$ on the factor $G/G'$ is trivial since every element of $K$ is an IA-automorphism. On the other factors, it is a consequence of the second part of the lemma above.
By a well-known result of Kalu\v{z}nin (see for instance \cite[p. 9]{SE}), it follows that $K'$ is nilpotent of class $\leq c$, as required.
\end{proof} 
% Proof of Theorem 2
\begin{proof}[Proof of Theorem 2] 
First notice that in an abelian group which is either finite or infinite non-periodic,
the group of all power automorphisms is finite. That is trivial when the group is finite. In the second case, the group of power automorphisms has order 2, the only non-identity
power automorphism being the inverse function $x\mapsto x^{-1}$
(see for instance \cite[Corollary 4.2.3]{CO} or \cite[13.4.3]{RO1}). Therefore, coming back to the proof of Theorem 1 when $G/G'$ is either finite or infinite non-periodic, we can assert that $\rm{Aut_n}(G)/K$ is finite. Since  $K$ is 
(nilpotent of class $\leq c$)-by-abelian, the result follows.
\end{proof} 
% Proof of Theorem 3
\begin{proof}[Proof of Theorem 3] 
Let $G$ be a supersoluble group. Since $G$ is polycyclic, 
so is $\rm{Aut_n}(G)$ \cite{FG}.
Thus  $\rm{Aut_n}(G)$ is finitely generated.
Let us prove now that $\rm{Aut_n}(G)$ is nilpotent-by-(finite and supersoluble).
By a result of Zappa  \cite[5.4.8]{RO1}, there is a normal series
\begin{equation}
1=G_m\unlhd G_{m-1}\unlhd\cdots\unlhd G_1\unlhd G_0=G
\label{series}
\end{equation}
in which each factor is cyclic of prime or infinite order. For any
$k\in\{0,1,\ldots,m\}$, we denote by $\Gamma_k$ the set of all normal automorphisms 
of $G$ which stabilize the series
$G_k\unlhd G_{k-1}\unlhd\cdots\unlhd G_1\unlhd G_0=G$ (we put
$\Gamma_0=\rm{Aut_n}(G)$). 
Clearly,  $\Gamma_0,\Gamma_1,\ldots,\Gamma_m$ forms a decreasing  
sequence of normal subgroups of $\rm{Aut_n}(G)$.
Using once again the result of Kalu\v{z}nin \cite[p. 9]{SE}, we can assert
that $\Gamma_m$ is nilpotent (of class at most $m-1$) since $\Gamma_m$
stabilizes the series (\ref{series}) above.
It remains to prove that $\rm{Aut_n}(G)/\Gamma_m$ is supersoluble and finite.
For each integer $k$ (with $0\leq k\leq m-1$), consider
the homomorphism $\Psi_k\colon \Gamma_k\to \rm{Aut}(G_k/G_{k+1})$,
where for any $f\in  \Gamma_k$,
$\Psi_k(f)$ is defined as the automorphism induced by $f$ on $G_k/G_{k+1}$.
We observe that $\rm{Aut}(G_k/G_{k+1})$ is finite cyclic and that 
$\ker\Psi_k=\Gamma_{k+1}$. Consequently,  the factor $\Gamma_{k}/\Gamma_{k+1}$
is finite cyclic, since it is isomorphic to a subgroup of $\rm{Aut}(G_k/G_{k+1})$.
It follows that
$$1=\left(\Gamma_{m}/\Gamma_{m} \right)
\unlhd\left(\Gamma_{m-1}/\Gamma_{m} \right)\unlhd
\cdots\unlhd \left(\Gamma_{1}/\Gamma_{m} \right)
\unlhd \left(\Gamma_{0}/\Gamma_{m} \right)=\left(\rm{Aut_n}(G)/\Gamma_{m} \right)$$
forms a normal series in which each factor is finite cyclic. Thus 
$\rm{Aut_n}(G)/\Gamma_m$ is supersoluble and finite and the proof is complete. 
\end{proof} 
%
%%%%%%% References
%

%
\end{document}